\newtheorem{thm}{Theorem}[section]
\newtheorem{prop}[thm]{Proposition}
\newtheorem{cor}[thm]{Corollary}
\newtheorem{qst}[thm]{Question}
\theoremstyle{definition}
\newtheorem*{defn*}{Definition}
\DeclareMathOperator{\ran}{ran}
\DeclareMathOperator{\crit}{crit}
\DeclareMathOperator{\cf}{cf}
\DeclareMathOperator{\ot}{ot}
\DeclareMathOperator{\scl}{SCl}
\newcommand{\p}{\mathbb{P}}
\newcommand{\dotq}{\dot{\mathbb{Q}}}
\newcommand{\dotp}{\dot{\mathbb{P}}}
\newcommand{\lessd}{{<}\delta}
\DeclareMathOperator{\cl}{Cl}
\title{Woodin cardinals and forcing}
\author{Stamatis Dimopoulos}
\address{School of Mathematics, University of Bristol, University Walk, Bristol, BS8 1TW, UK}
\email{stamatis.dimopoulos@bristol.ac.uk}
\thanks{The author wishes to thank his supervisor Andrew Brooke-Taylor for his guidance and support while producing the results presented here.}
\begin{document}

\begin{abstract}
Despite being an established notion in the large cardinal hierarchy, results about Woodin cardinals are sparse in the literature. Here we gather known results about the preservation of Woodin cardinals under certain forcing extensions, as well as giving a template for preserving Woodin cardinals through forcing. Using this template, we form an indestructibility result under certain Easton iterations.
\end{abstract}

\maketitle

\section{Introduction} It is customary in set theory to establish results on the preservation of large cardinals in forcing extensions. The main use of such theorems is to check whether interesting combinatorial properties are compatible with the existence of large cardinals.

In this article, we focus at Woodin cardinals and their preservation in forcing extensions. A cardinal $\delta$ is Woodin if for every function $f:\delta\to\delta$, there is $\kappa<\delta$ such that $f``\kappa\subseteq \kappa$ and there is an elementary embedding $j:V\to M$ with critical point $\kappa$, such that $V_{j(f)(\kappa)}\subseteq M$. There is a naturally defined normal filter $F$ on $\delta$, where $X\subseteq \kappa$ is a member of $F$ iff the $\kappa$ in the above definition of Woodinness can be found in $X$. In fact, $\delta$ is Woodin iff $F$ is a $\delta$-complete normal filter on $\delta$ (see 26.15 in \cite{kanamori}).

Firstly, we will prove the following theorem, which gathers known results. The first clause appears in \cite{hamkins-woodin-strong} and the last clause generalises a result which appears in \cite{cody-easton-woodin}.

\begin{thm}\label{thm:woodin-forcing}
Suppose $\delta$ is a Woodin cardinal and $\p$ is a forcing notion that satisfies one of the following properties:
\begin{itemize}
	\item $\p$ has size less than $\delta$,
	\item $\p$ does not change $V_{\delta+1}$,
	\item $\p$ is $\delta$-strategically closed.
\end{itemize}
Then $\delta$ remains Woodin after forcing with $\p$.
\end{thm}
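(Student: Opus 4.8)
The plan is to verify, in each of the three cases, the extender form of Woodinness: $\delta$ is Woodin iff for every $A\subseteq V_\delta$ there is $\kappa<\delta$ that is $<\delta$-$A$-strong, i.e.\ for every $\lambda<\delta$ there is $j:V\to M$ with $\crit(j)=\kappa$, $V_\lambda\subseteq M$ and $A\cap V_\lambda=j(A)\cap V_\lambda$. A preliminary observation is that in all three cases $\delta$ remains inaccessible in $V[G]$ (by $|\p|<\delta$ in the first case, and because the other two forcings add no new bounded subsets of $\delta$), so this characterisation is available in the extension. The three clauses then diverge according to how much of the ground-model embedding structure survives.

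For the small forcing $\p$ with $|\p|<\delta$ I would run a L\'evy--Solovay argument on the functional definition. Given $f:\delta\to\delta$ in $V[G]$ with name $\dot f$, define $g\in V$ by letting $g(\alpha)$ bound the (at most $|\p|$ many, hence bounded below the inaccessible $\delta$) values forced for $\dot f(\check\alpha)$; then $f(\alpha)<g(\alpha)$ for all $\alpha$, and I may take $g$ increasing with $g(\alpha)\ge\alpha$. Applying Woodinness in $V$ to $g$ — and using $\delta$-completeness of the Woodin filter $F$ to insist that $\crit=\kappa>|\p|$ — I obtain $\kappa$ closed under $g$ (hence under $f$) and $j:V\to M$ with $\crit(j)=\kappa$ and $V_{j(g)(\kappa)}\subseteq M$. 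Since $\crit(j)>|\p|$ we have $j(\p)=\p$ and $G$ is $M$-generic, so $j$ lifts to $j:V[G]\to M[G]$. Finally $j(f)(\kappa)<j(g)(\kappa)$ by elementarity, and since $j(g)(\kappa)>|\p|$ we have $V_{j(g)(\kappa)}^{V[G]}=V_{j(g)(\kappa)}^{V}[G]\subseteq M[G]$; hence $V_{j(f)(\kappa)}^{V[G]}\subseteq M[G]$, as required.

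For $\p$ that does not change $V_{\delta+1}$ I would argue by pure absoluteness, the point being that Woodinness of $\delta$ is a statement about $\langle V_{\delta+1},\in\rangle$. Indeed, because $\delta$ is inaccessible, any witness to ``$\kappa$ is $<\delta$-$A$-strong'' may be replaced by an extender $E\in V_\delta$ of strength $\lambda<\delta$: for such $E$ the assertions $\crit(E)=\kappa$, $V_\lambda\subseteq\ult(V,E)$ and $A\cap V_\lambda=j_E(A)\cap V_\lambda$ are all computable from $E\in V_\delta$ and $A\in\powerset(V_\delta)$ inside $V_{\delta+1}$, since the elements of $\ult(V,E)$ of rank below $\lambda$ are represented by functions lying in $V_\delta$. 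Thus ``$\delta$ is Woodin'' unfolds as a first-order statement over $\langle V_{\delta+1},\in\rangle$, quantifying only over $A\in\powerset(V_\delta)$, over $\kappa,\lambda<\delta$, and over extenders and representing functions in $V_\delta$. As $V_{\delta+1}^{V[G]}=V_{\delta+1}^{V}$, this statement has the same truth value in $V$ and in $V[G]$, so $\delta$ stays Woodin.

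The strategically closed case is the substantial one, and where I expect the real work. Here $\p$ is only $<\delta$-distributive, so $V_\delta$ is unchanged and $\delta$ stays inaccessible, but $V_{\delta+1}$ may grow and absoluteness is unavailable. Given a (possibly new) $A\subseteq V_\delta$ in $V[G]$, distributivity still gives $A\cap V_\alpha\in V$ for every $\alpha<\delta$. I would fix a name $\dot A$ and code $\dot A$ together with $\p$ and a winning strategy for player II into a single set $B\subseteq V_\delta$ in $V$, then apply Woodinness in $V$ to $B$ to obtain one $\kappa<\delta$ that is $<\delta$-$B$-strong. For each $\lambda<\delta$ the resulting $j:V\to M$ fixes $\p$, the strategy, and $\dot A$ below level $\lambda$; choosing $j$ of sufficiently high strength that $M$ is adequately closed, I would lift it through $\p$ to $j:V[G]\to M[G^*]$ by building an $M$-generic $G^*$ for $j(\p)$ with $j``G\subseteq G^*$. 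Agreement of the names below $\lambda$ together with $j``G\subseteq G^*$ then yields $A\cap V_\lambda=j(A)\cap V_\lambda$, and the uniform $\kappa$ witnesses that $\delta$ is Woodin in $V[G]$. The main obstacle is precisely the construction of $G^*$: one must produce a lower bound (master condition) for the directed set $j``G$ and then diagonalise against the dense subsets of $j(\p)$ in $M$, and it is exactly the $j(\delta)$-strategic closure of $j(\p)$ in $M$ that supplies lower bounds at the limit stages of this construction — the delicacy being to guarantee enough closure of $M$ relative to $|\p|$, arranged by taking the strength of $j$ large, so that the whole construction can be carried out in $V[G]$.
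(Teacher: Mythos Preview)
Your treatments of the first two clauses are correct and match the paper: small forcing via a L\'evy--Solovay lift, and the $V_{\delta+1}$-preserving case via the local definability of Woodinness over $\langle V_{\delta+1},\in\rangle$.

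For the $\delta$-strategically closed case, however, your plan has a real obstruction. You propose to code $\dot A$, $\p$, and a winning strategy for II into a single $B\subseteq V_\delta$, and then to lift $j$ by producing a master condition for $j``G$ and diagonalising against the dense subsets of $j(\p)$ in $M$. But nothing in the hypothesis bounds $|\p|$: a $\delta$-strategically closed forcing such as $\Add(\delta^+,\mu)$ can have size arbitrarily far above $\delta$. Hence neither $\p$ nor the strategy nor a name $\dot A$ need be codable inside $V_\delta$ at all, and no choice of strength $\lambda<\delta$ makes $M$ ``adequately closed relative to $|\p|$'' as you hope---strength $\lambda$ gives $V_\lambda\subseteq M$, not ${}^\lambda M\subseteq M$, and in any case $\lambda<\delta$ may be far below $|\p|$. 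So both the coding step and the diagonalisation step break down in general.

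The paper sidesteps both problems. It works with the functional definition of Woodinness rather than the $A$-strong one, and uses the $\delta$-strategic closure \emph{directly on the name} $\dot f$: a play of $G_\delta(\p)$ below a given condition yields a decreasing $\delta$-sequence along which $\dot f$ is completely decided, producing a ground-model function $g:\delta\to\delta$ with $f\restriction\kappa=g\restriction\kappa$ for the witnessing $\kappa$ (so that $j(f)(\kappa)=j(g)(\kappa)$). Woodinness in $V$ is then applied to $g$, and---this is the key device you are missing---the resulting $j$ is taken to be a $(\kappa,\lambda)$-extender embedding, so that Proposition~\ref{thm:transering-generic} applies: since $\p$ is in particular $\kappa^+$-distributive, the filter generated by $j``G$ is already $M$-generic for $j(\p)$. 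No master condition, no diagonalisation, and no closure hypothesis on $M$ are needed. Finally $(V_{j(g)(\kappa)})^{V[G]}=V_{j(g)(\kappa)}$ by $\delta$-distributivity (as $j(g)(\kappa)<\delta$), which transfers the witness from $g$ to $f$.
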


Moreover, we will describe a template for preserving Woodin cardinals through Easton iterations. We make substantial use of the ${}^\delta \delta$-bounding property to simplify our arguments and the point is that in extensions of such forcings, it suffices to verify the definition of Woodinness for functions in the ground model. Using this template we can simplify proofs, such as that of the following theorem which appeared in \cite{cody-easton-woodin}.

\begin{thm}[Cody, \cite{cody-easton-woodin}]\label{thm:woodin-easton}
Assume GCH holds, $\delta$ be a Woodin cardinal and $F:Reg\to Card$ is an Easton function such that $F``\delta\subseteq \delta$. Then there is a generic extension in which $\delta$ is still Woodin and for every regular cardinal $\gamma$, $2^\gamma=F(\gamma)$.
\end{thm}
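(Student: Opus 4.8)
The plan is to realise $F$ by the standard Easton-support product $\p$ with components $\Add(\gamma,F(\gamma))$ for regular $\gamma$, which under $\GCH$ forces $2^\gamma=F(\gamma)$ for every regular $\gamma$, and to split it at $\delta$ as $\p\cong\p_{\lessd}\times\p_{\geq\delta}$. The two facts I would establish first are structural: since a Woodin cardinal is inaccessible (indeed Mahlo), the hypothesis $F``\delta\subseteq\delta$ together with $\GCH$ makes $\p_{\lessd}$ a $\delta$-cc forcing of size $\delta$, while $\p_{\geq\delta}$ is ${<}\delta$-closed and hence $\delta$-strategically closed.

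Given this factorisation, I would force in two stages and let Theorem~\ref{thm:woodin-forcing} absorb the tail. First force with $\p_{\lessd}$ to obtain $V[G]$, where the main claim (below) is that $\delta$ is still Woodin. Then force the tail $\p_{\geq\delta}$: being $\delta$-strategically closed it preserves Woodinness by the third clause of Theorem~\ref{thm:woodin-forcing}. Because $\p_{\geq\delta}$ is ${<}\delta$-closed it adds no subsets to any $\gamma<\delta$, so the values $2^\gamma=F(\gamma)$ for regular $\gamma<\delta$ arranged by $\p_{\lessd}$ survive; the $\delta$-cc of $\p_{\lessd}$ together with routine Easton bookkeeping then yields $2^\gamma=F(\gamma)$ for all regular $\gamma$ in the final extension.

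The heart of the argument is therefore that $\delta$ remains Woodin in $V[G]$ after the size-$\delta$ forcing $\p_{\lessd}$; here neither of the first two clauses of Theorem~\ref{thm:woodin-forcing} applies, so I would invoke the template. As $\p_{\lessd}$ is $\delta$-cc it is ${}^\delta\delta$-bounding, so it suffices to verify the definition of Woodinness in $V[G]$ for functions $f\colon\delta\to\delta$ that already lie in $V$. Fix such an $f$; by Woodinness in $V$ choose $\kappa<\delta$ with $f``\kappa\subseteq\kappa$ and an embedding $j\colon V\to M$ with $\crit(j)=\kappa$ and $V_{j(f)(\kappa)}\subseteq M$, drawing $\kappa$ from the Woodin filter so that (after the bounding reduction, which lets one pass to a sufficiently fast-growing $f$) $\lambda:=j(f)(\kappa)$ is large enough that $\p_{\lessd}=(\p_{\lessd})^M$ and $G$ is $M$-generic. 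I would then lift $j$ to $j^{+}\colon V[G]\to M[G^{+}]$: below $\kappa$ the embedding fixes $\p_{<\kappa}$ pointwise so the lift is automatic, and on the interval $[\kappa,j(\delta))$ one builds an $M$-generic $G^{+}$ containing $j``G$ by a master-condition/diagonalisation argument, using the closure of $M$ supplied by $V_\lambda\subseteq M$. Since $\crit(j^{+})=\kappa$, the closure point $f``\kappa\subseteq\kappa$ persists, and $V[G]_{j^{+}(f)(\kappa)}=V[G]_\lambda\subseteq M[G^{+}]$ because $V_\lambda\subseteq M$ and both models are computed from the same generic below $\lambda$; this witnesses Woodinness of $\delta$ for $f$ in $V[G]$.

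The step I expect to be the main obstacle is precisely this lifting: producing the $M$-generic filter $G^{+}$ for $j(\p_{\lessd})$ over $M$ that contains the pointwise image $j``G$, that is, the construction of the relevant master condition for the coordinates in $[\kappa,j(\delta))$, and checking that the closure of $M$ given by $V_{j(f)(\kappa)}\subseteq M$ is exactly sufficient. This is where the choice of $\kappa$ in the Woodin filter and the ${}^\delta\delta$-bounding reduction to a large-enough ground-model $f$ do the real work; by contrast the factorisation, the closure and chain-condition computations, and the Easton bookkeeping for the continuum function are all routine.
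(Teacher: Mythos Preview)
Your outline correctly isolates the hard step, but the proposed resolution of that step does not go through, and this is exactly why the paper does \emph{not} use the naive Cohen product.

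The problem is the coordinate at $\kappa$. With the Easton product you have $\Add(\kappa,F(\kappa))$ sitting at stage $\kappa$, and to lift $j$ you must produce an $M$-generic for $j(\Add(\kappa,F(\kappa)))$ containing $j``g_\kappa$. Neither of your two tools applies here. A master condition would have to contain $\bigcup j``g_\kappa$, but the generic $g_\kappa$ has size $F(\kappa)$, and once $F(\kappa)>\kappa^{+}$ the extender model $M$ (which is only closed under $\kappa$-sequences, cf.\ Proposition~\ref{prop:a-strong-extender}) need not contain $j``g_\kappa$ at all, let alone as a condition of size ${<}j(\kappa)$. Diagonalisation in the sense of Proposition~\ref{thm:diagonalisation} is also unavailable: $\Add(\kappa,\cdot)$ is only ${<}\kappa$-closed, not ${<}\kappa^{+}$-strategically closed, so you cannot thread through the antichains while staying below the image of the generic. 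The closure hypothesis $V_\lambda\subseteq M$ gives you nothing extra here, since $\lambda<\delta\leq F(\kappa)$ is perfectly possible. This is a well-known obstruction to lifting strongness (as opposed to supercompactness) embeddings through Cohen forcing that blows up $2^\kappa$ beyond $\kappa^{+}$.

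The paper, following Cody and in turn Friedman--Thompson, avoids this by replacing $\Add(\kappa,F(\kappa))$ at inaccessible closure points of $F$ by $Sacks(\kappa,F(\kappa))$. The fusion property of Sacks forcing yields the ``tuning fork'' phenomenon: $\bigcap_{p\in g_\kappa} j(p)(\alpha)$ is either a single branch or a pair of branches splitting at $\kappa$, and from these one reads off an $M[j(G_\kappa)]$-generic for $j(Sacks(\kappa,F(\kappa)))$ directly in $V[G_\lambda]$, without any master condition. A second, smaller gap in your sketch is that you only arrange $\kappa$ to be strong for $f$; to get $j(\p_{<\kappa})$ to factor with $\p_\lambda$ as an initial segment you also need $j(F)\restriction\lambda=F\restriction\lambda$, which is why the paper takes $\kappa$ to be ${<}\delta$-strong for $F\sqcup h$, not just for $h$.
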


Moreover, we show the following indestructibility result. We need the notion of a \emph{strong closure point} of a forcing iteration $\p$, which is an ordinal $\alpha$ such that
\begin{enumerate}
	\item $\p_\alpha\subseteq V_\alpha$,
	\item for all $\beta\geq\alpha$, $\Vdash_\beta \dotq_\beta$ is $<\alpha^+$-strategically closed.
\end{enumerate} 

\begin{thm}\label{thm:woodin-indesructibility}
Suppose $\delta$ is a Woodin cardinal, GCH holds and $\p$ is an Easton support $\delta$-iteration which satisfies:
\begin{enumerate}
	\item $\p\subseteq V_\delta$
	\item the set of strong closure points of $\p$ is a member of the Woodin filter of $\delta$.
\end{enumerate}
Then $\delta$ remains Woodin after forcing with $\p$.
\end{thm}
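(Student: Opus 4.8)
The plan is to verify the Woodin filter definition directly for $\delta$ in the extension $V[G]$, where $G$ is $\p$-generic. So let $\dot f$ name a function $f:\delta\to\delta$ in $V[G]$. The key leverage is the ${}^\delta\delta$-bounding property: because $\p$ is an Easton support $\delta$-iteration with $\p\subseteq V_\delta$ and GCH holds, I expect $\p$ to be ${}^\delta\delta$-bounding, so there is a ground-model function $g:\delta\to\delta$ dominating $f$ everywhere, i.e. $f(\alpha)\le g(\alpha)$ for all $\alpha<\delta$. As flagged in the excerpt (``it suffices to verify the definition of Woodinness for functions in the ground model''), it is enough to find the relevant $\kappa$ and embedding for $g$ rather than for $f$, since any $j$ with $V_{j(g)(\kappa)}\subseteq M$ will also satisfy $V_{j(f)(\kappa)}\subseteq M$ once $f\le g$ is preserved upward by $j$. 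I would make this reduction precise first.

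Next I would apply Woodinness of $\delta$ in $V$ to the ground-model function $g$, but crucially intersect with the hypothesis. Since the set $S$ of strong closure points of $\p$ is a member of the Woodin filter $F$ of $\delta$, and $F$ is a normal $\delta$-complete filter, the set of $\kappa<\delta$ witnessing Woodinness for $g$ meets $S$ in an $F$-large set. So I can pick $\kappa\in S$ together with an elementary embedding $j:V\to M$ with $\crit(j)=\kappa$, $g``\kappa\subseteq\kappa$, and $V_{j(g)(\kappa)}\subseteq M$. The point of demanding $\kappa$ be a strong closure point is that it lets me factor the iteration as $\p=\p_\kappa * \dotq_{\ge\kappa}$, where $\p_\kappa\subseteq V_\kappa$ has size $\le\kappa$ (so is small enough to be handled by the lifting machinery at $\kappa=\crit(j)$) and the tail $\dotq_{\ge\kappa}$ is forced to be $<\kappa^+$-strategically closed.

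The heart of the argument is lifting $j$ to the extension. Write $G=G_\kappa * G_{\ge\kappa}$. The bottom part $\p_\kappa$ has size at most $\kappa$, so by the first bullet of Theorem~\ref{thm:woodin-forcing} (or rather the standard small-forcing lifting that underlies it) the embedding lifts to $j:V[G_\kappa]\to M[j(G_\kappa)]$; here one uses that $j(\p_\kappa)$ factors as $\p_\kappa*\,\text{(tail)}$ and that $G_\kappa$ generates a generic for the bottom of $j(\p_\kappa)$ in $M$. For the tail $G_{\ge\kappa}$, the strategic closure gives $V_\kappa$-level agreement: the tail is $<\kappa^+$-strategically closed, so it adds no new subsets of $V_\kappa$ up to the relevant rank, and I can build an $M$-generic for $j(\dotq_{\ge\kappa})$ by a master-condition / transfinite-strategy construction, exploiting that $M$ is closed enough below $j(g)(\kappa)$ because $V_{j(g)(\kappa)}\subseteq M$. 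This is the analogue of the third bullet of Theorem~\ref{thm:woodin-forcing}. Combining the two lifts yields $j:V[G]\to M[j(G)]$ with critical point $\kappa$, and I must check the target requirement $V_{j(g)(\kappa)}^{V[G]}\subseteq M[j(G)]$ is preserved, using that neither forcing adds sets at that rank outside the lifted model.

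The main obstacle I expect is the tail lifting: constructing the $M$-generic $G^*$ for $j(\dotq_{\ge\kappa})$ and verifying $V_{j(g)(\kappa)}^{V[G]}\subseteq M[G_\kappa][G^*]$ simultaneously. One has to ensure the strategic-closure strategy used to meet the (at most $j(g)(\kappa)$-many, hence $M$-enumerable) dense sets actually lives in $M$ and produces a thread, and that the Easton support structure of $j(\p)$ above $\kappa$ is compatible with the $<\kappa^+$-closure so that no new bounded subsets of $\kappa$ sneak in to spoil $g``\kappa\subseteq\kappa$ or the rank-$j(g)(\kappa)$ agreement. Once the ${}^\delta\delta$-bounding reduction and the factoring at a strong closure point are in place, the rest is a careful amalgamation of the small-forcing and strategically-closed lifting arguments already packaged in Theorem~\ref{thm:woodin-forcing}.
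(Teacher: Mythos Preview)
Your reduction to a ground-model function $g$ and your choice of $\kappa$ as a strong closure point are right, but the embedding you pick is too weak to carry the lift. You take $j:V\to M$ witnessing Woodinness for $g$, i.e.\ with $V_{j(g)(\kappa)}\subseteq M$; the paper instead uses the $A$-strongness characterisation (Proposition~\ref{prop:defns-woodin}) and chooses $\kappa$ to be ${<}\delta$-strong for $g\sqcup\p\sqcup\scl(\p)$, then fixes an inaccessible $\lambda\in(\kappa,\delta)$ which is a limit of strong closure points and takes $j$ to be a $\lambda$-strongness embedding for that triple. The extra coherence $j(\p)\cap V_\lambda=\p\cap V_\lambda$ is what makes the bottom lift work: it forces $j(\p_\kappa)\simeq\p_\lambda\ast\dotp_{tail}$, so the \emph{actual} $V$-generic $G_\lambda$ serves as the first factor of the $M$-generic, and only the short tail above $\lambda$ is built by diagonalisation. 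With your embedding there is no reason $j(\p_\kappa)$ agrees with $\p$ above $\kappa$, so you cannot feed $G_{[\kappa,j(g)(\kappa))}$ into $M$; any $M$-generic for the tail of $j(\p_\kappa)$ that you construct by a closure argument will be unrelated to $G$, and then the crucial inclusion $(V_{j(g)(\kappa)})^{V[G]}\subseteq M[j(G)]$ --- which needs $V_{j(g)(\kappa)}[G_{j(g)(\kappa)}]$ inside the target --- simply fails. This is exactly the obstacle you flag in your last paragraph, and it is not a technicality: it is the reason the paper encodes $\p$ and $\scl(\p)$ into the set $A$.

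Two smaller points. First, $\p_\kappa$ has size $\kappa$, not ${<}\kappa$, so the ``small forcing'' lift does not apply; one really must factor $j(\p_\kappa)$ and build a generic for the upper part, and the counting of antichains for diagonalisation is where GCH is used. Second, for the tail $\p_{\geq\kappa}$ the paper does not use a master-condition construction but rather Proposition~\ref{thm:transering-generic}: since $j$ is a $(\kappa,\lambda)$-extender embedding and $\p_{\geq\kappa}$ is $\kappa^+$-distributive, the filter generated by $j``G_{\geq\kappa}$ is already $M[j(G_\kappa)]$-generic. Your master-condition idea would need $j``G_{\geq\kappa}$ to have a lower bound in $j(\p_{\geq\kappa})$, which is not available here because $M$ is only ${}^\kappa M\subseteq M$, not closed under $\delta$-sequences.
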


With the usual clo

\iffalse
As an application of the indestructibility of Woodin cardinals, we look at the forcing to add $\square$-sequences in the sense of Jensen at all infinite cardinals. We recall that a $\square_\kappa$ sequence is a sequence of the form $\langle C_\alpha\mid \alpha<\kappa^+\ranlge$ such that 
\begin{enumerate}
	\item $C_\alpha$ is a club in $\alpha$
	\item $\ot(C_\alpha)<\alpha$
	\item the sequence is coherent: if $\alpha$ is a limit point of $\beta$, then $C_\alpha=C_\beta\cap \alpha$.
\end{enumerate}
\fi

The notation we use is fairly standard. We will occasionally use interval notation $(\alpha,\beta)$ for two ordinals $\alpha<\beta$, to denote the set $\{\xi\mid \alpha<\xi<\beta\}$. For the properties of $\alpha$-strategically closed, ${<}\alpha$-strategically closed and $\alpha$-distributive forcings, we address the reader to Section 5 in \cite{cummings-handbook}. %We stress that $\alpha$-distributive means ${<}\alpha$-distributive. Also, 
A forcing $\p$ is called ${}^\delta \delta$-bounding if every function $f:\delta\to\delta$ in the forcing extension by $\p$ is bounded by some ground model function, i.e.
$$\Vdash_\p \forall \dot{f}\in {}^\delta \delta~ \exists g\in {}^\delta \delta\cap V ~\forall \alpha<\delta (\dot{f}(\alpha)<g(\alpha)).$$
It is easy to see that if $\p$ is $\delta$-c.c. it is ${}^\delta \delta$-bounding.

The large cardinal notions we deal with are witnessed by the existence of elementary embeddings of the form $j:V\to M$, where $V$ is the universe we work in and $M\subseteq V$ is a transitive class. The critical point of an elementary embedding $j$ is denoted by $\crit(j)$.

For two cardinals $\kappa,\lambda$ we say $\kappa$ is \emph{$\lambda$-strong} if there is $j:V\to M$ with $\crit(j)=\kappa$, $j(\kappa)>\lambda$ and $V_\lambda\subseteq M$. We will also say $\kappa$ is \emph{${<}\mu$-strong} if it is $\lambda$-strong for all $\lambda<\mu$. We will always assume that $\lambda\geq\kappa$ even when not mentioned explicitly. 

A cardinal $\kappa$ is called \emph{$\lambda$-strong for $A$}, where $A$ is any set, if there a $\lambda$-strongness embedding $j:V\to M$ with $\crit(j)=\kappa$, satisfying the property $A\cap V_\lambda=j(A)\cap V_\lambda$. Once again, we use expressions like $\kappa$ is ${<}\mu$-strong for $A$ to mean that $\kappa$ is $\lambda$-strong for $A$, for all $\lambda<\mu$ and it is always assumed that $\lambda\geq\kappa$. We are going to make use of the following results.

\begin{prop}\label{prop:a-strong-extender}
Suppose $j:V\to M$ is a $\lambda$-strongness for $A$ embedding with $\crit(j)=\kappa$, where $\lambda\geq\kappa$ is a $\beth$-fixed point. If $E$ is the $(\kappa,\lambda)$-extender derived from $j$, then the extender embedding $j_E:V\to M_E$ is also $\lambda$-strong for $A$. Moreover, if $\cf(\lambda)>\kappa$, then ${}^\kappa M_E\subseteq M_E$.
\end{prop}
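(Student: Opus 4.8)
The plan is to run the standard factorisation of $j$ through the extender ultrapower and then verify the two additional features, the $A$-agreement and the $\kappa$-closure, by hand.

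First I would form the extender ultrapower $j_E:V\to M_E=\ult(V,E)$ together with the canonical factor embedding $k:M_E\to M$ determined by $k(j_E(f)(a))=j(f)(a)$, so that $j=k\circ j_E$. The standard theory of the $(\kappa,\lambda)$-extender gives $\crit(j_E)=\kappa$ and $\crit(k)\geq\lambda$, hence $k\restriction\lambda=\mathrm{id}$, together with $j_E(\kappa)>\lambda$, which is standard for the extender derived from a $\lambda$-strongness embedding. The crucial use of the hypothesis that $\lambda$ is a $\beth$-fixed point is exactly to secure $V_\lambda\subseteq M_E$: since $|V_\gamma|<\lambda$ for every $\gamma<\lambda$, each element of $V_\lambda$ is coded by an ordinal below $\lambda$ via a ground-model bijection, and such ordinals are generators of $E$, so they, and hence all of $V_\lambda$, lie in $M_E$; the same coding shows $k\restriction V_\lambda=\mathrm{id}$. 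Combined with $\crit(j_E)=\kappa$ and $j_E(\kappa)>\lambda$, this already yields that $j_E$ is a $\lambda$-strongness embedding with critical point $\kappa$. I would cite these as standard facts rather than reprove them.

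Next, for the $A$-agreement, I would argue directly from $j=k\circ j_E$ and $k\restriction V_\lambda=\mathrm{id}$. Given $x\in V_\lambda$, elementarity and injectivity of $k$ give $x\in j_E(A)\iff k(x)\in k(j_E(A))=j(A)$, and since $k(x)=x$ this reads $x\in j_E(A)\iff x\in j(A)$. Hence $j_E(A)\cap V_\lambda=j(A)\cap V_\lambda$, which equals $A\cap V_\lambda$ by the hypothesis that $j$ is $\lambda$-strong for $A$. Thus $j_E$ is $\lambda$-strong for $A$.

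For the closure clause, assume $\cf(\lambda)>\kappa$ and take $\vec{x}=\langle x_\xi:\xi<\kappa\rangle\in{}^\kappa M_E$, choosing in $V$ representatives $x_\xi=j_E(f_\xi)(a_\xi)$ with $a_\xi\in[\lambda]^{<\omega}$ and $f_\xi\in V$. Because $\cf(\lambda)>\kappa$, the supports are bounded: $\bigcup_{\xi<\kappa}a_\xi\subseteq\eta$ for some $\eta<\lambda$. Then the seed sequence $\langle a_\xi:\xi<\kappa\rangle$ is a $\kappa$-sequence of elements of $V_\eta$, so (again using $\cf(\lambda)>\kappa$) it has rank below $\lambda$ and therefore lies in $V_\lambda\subseteq M_E$; meanwhile $\langle j_E(f_\xi):\xi<\kappa\rangle=j_E(\langle f_\xi:\xi<\kappa\rangle)\restriction\kappa$ also lies in $M_E$. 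Since $\vec{x}$ is definable in $M_E$ from these two sequences via $x_\xi=(j_E(f_\xi))(a_\xi)$, we conclude $\vec{x}\in M_E$. The only real obstacle is the first step, namely establishing $V_\lambda\subseteq M_E$ and $k\restriction V_\lambda=\mathrm{id}$, which is the precise reason the $\beth$-fixed point hypothesis is imposed; once this is in place, both the $A$-bookkeeping and the counting argument for $\kappa$-closure are routine.
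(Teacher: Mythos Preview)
Your argument is correct and is precisely the standard factorisation proof one expects here: pass to the derived extender ultrapower, use the factor map $k$ with $k\restriction V_\lambda=\mathrm{id}$ (this is where the $\beth$-fixed-point hypothesis is spent) to pull back both $V_\lambda\subseteq M$ and the $A$-agreement from $j$ to $j_E$, and then run the bounded-support counting argument for $\kappa$-closure when $\cf(\lambda)>\kappa$.

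There is nothing to compare against, however: the paper does not prove this proposition at all but simply refers the reader to Remark~1 of Cody's paper \cite{cody-easton-woodin}. Your write-up is thus strictly more informative than what appears in the paper, and it matches what one finds when chasing that reference. One very small remark: the claim $j_E(\kappa)>\lambda$ (as opposed to $\geq\lambda$) is slightly delicate in general, but you explicitly flag it as a standard fact to be cited, and for the uses later in the paper $\lambda$ is always taken inaccessible, where no issue arises.
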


\begin{proof}
See Remark 1 in \cite{cody-easton-woodin}.
\end{proof}

Since our proofs involve lifting elementary embeddings though forcing extensions, we give the following two results about the construction of generic filters.

\begin{prop}[Diagonalisation]\label{thm:diagonalisation}
Suppose $M\subseteq V$ is an inner model, $\p\in M$ is a forcing notion and $p\in \p$. If
\begin{enumerate}
\item ${}^\kappa M\subseteq M$
\item $\p$ is ${<}\kappa^+$-strategically closed in $M$
\item there are at most $\kappa^+$-many maximal antichains of $\p$ in $M$, counted in $V$,
\end{enumerate}
then there is in $V$, an $M$-generic filter $H\subseteq \p$ such that $p\in H$.
\end{prop}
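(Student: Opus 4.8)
The plan is to build $H$ in $V$ by a recursion of length $\kappa^+$ that meets every maximal antichain of $\p$ lying in $M$, using the strategic closure to pass through limit stages and the closure of $M$ to keep the strategy applicable. First, invoking hypothesis (3), I would fix in $V$ an enumeration $\langle A_\xi : \xi < \kappa^+\rangle$ of all maximal antichains of $\p$ that belong to $M$ (padding with repetitions if there are fewer than $\kappa^+$ of them). Invoking hypothesis (2), I would fix a winning strategy $\sigma \in M$ for player II witnessing that $\p$ is ${<}\kappa^+$-strategically closed.

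Then I would construct a descending sequence $\langle p_\xi : \xi < \kappa^+\rangle$ of conditions with $p_0 \le p$, interleaving player-II moves dictated by $\sigma$ with player-I moves that meet successive antichains. At a successor stage I would take $\sigma$'s response $q$ to the play so far and then, since $A_\xi$ is a maximal antichain, choose $a \in A_\xi$ compatible with $q$ and let $p_{\xi+1}$ be a common extension of $q$ and $a$; this forces $H$ to meet $A_\xi$. At a limit stage $\lambda < \kappa^+$ I would use $\sigma$ to produce a lower bound $p_\lambda$ of $\langle p_\xi : \xi < \lambda\rangle$. Finally, setting $H = \{\, q \in \p : \exists \xi < \kappa^+\ p_\xi \le q \,\}$ gives a filter with $p \in H$ which meets every $A_\xi$, hence is $M$-generic.

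The main obstacle is making sense of ``use $\sigma$'' at limit stages $\lambda$ with $\kappa < \lambda < \kappa^+$: the strategy $\sigma$ lives in $M$ and can only be applied to legal partial plays lying in $M$, whereas an initial segment $\langle p_\xi : \xi < \lambda\rangle$ of length exceeding $\kappa$ need not be a member of $M$. This is exactly where hypothesis (1) and the regularity of $\kappa^+$ enter: every limit $\lambda < \kappa^+$ has $\cf(\lambda) \le \kappa$, so by fixing in advance (in $V$) a cofinal subsequence of order type $\cf(\lambda)$ and arranging the recursion so that this subsequence is itself a run of $\sigma$, the sequence to which $\sigma$ is applied has length at most $\kappa$ and therefore lies in $M$ by ${}^\kappa M \subseteq M$; $\sigma$ then returns a genuine lower bound, which is also a lower bound of the whole initial segment. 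Organising this bookkeeping—choosing the cofinal subsequences coherently so that $\sigma$ is always fed a legal play of length $\le \kappa$—is the only delicate point, after which the verification that $H$ is generic is routine.
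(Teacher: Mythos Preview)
The paper does not give its own argument here; it simply refers the reader to Proposition~8.1 of Cummings' handbook chapter and to Hamkins. Your sketch is exactly the standard diagonalisation proof found in those references: enumerate the $\kappa^+$ antichains, build a descending $\kappa^+$-sequence below $p$ meeting each in turn, and invoke strategic closure together with ${}^\kappa M\subseteq M$ at limit stages. You have also correctly isolated the one genuinely non-trivial point---that at a limit $\lambda$ with $\kappa<\lambda<\kappa^+$ the initial segment need not lie in $M$, so one must pass to a cofinal subsequence of length $\cf(\lambda)\le\kappa$ and arrange that this subsequence is a legal $\sigma$-play---and your proposed resolution is the intended one.
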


\begin{proof}
See Proposition 8.1 in \cite{cummings-handbook} or Theorem 51 in \cite{hamkins-largecardinals}.
\end{proof}

\begin{prop}\label{thm:transering-generic}
Suppose $j:V\to M$ is an elementary embedding with $\crit(j)=\kappa$,  generated by a $(\kappa,\lambda)$-extender. If $\p$ is a $\kappa^+$-distributive forcing notion and $G\subseteq \p$ is a $V$-generic filter, then the filter generated by $j``G$ is $M$-generic for $j(\p)$.
\end{prop}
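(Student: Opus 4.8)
The plan is to show that the upward closure $H$ of $j``G$ in $j(\p)$ is a filter meeting every dense subset of $j(\p)$ that lies in $M$. First I would observe that since $G$ is directed and $j$ is order-preserving, $j``G$ is directed, so its upward closure
$$H=\{q\in j(\p)\mid \exists p\in G,\ j(p)\leq q\}$$
is a genuine filter on $j(\p)$. Because $H$ is upward closed, it suffices to verify that $H$ meets every dense \emph{open} subset of $j(\p)$ in $M$: given an arbitrary dense $D\in M$, a condition in $H$ below some element of $D$ (obtained by meeting the downward closure of $D$) already lies in $H\cap D$ by upward closure.

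Next I would invoke the extender representation of $M$. Since $j$ is generated by a $(\kappa,\lambda)$-extender $E$, every element of $M$ — in particular a given dense open $D\subseteq j(\p)$ — can be written as $D=j(\vec D)(a)$ for some seed $a\in[\lambda]^{<\omega}$ and some $\vec D\in V$ with domain $[\kappa]^{|a|}$. Since $M$ believes that $j(\vec D)(a)$ is dense open in $j(\p)$, by \L o\'{s}'s theorem for $E$ the set of $u\in[\kappa]^{|a|}$ for which $\vec D(u)$ is dense open in $\p$ has $E_a$-measure one; redefining $\vec D$ to equal $\p$ off this set leaves $j(\vec D)(a)$ unchanged and arranges that $\vec D(u)$ is dense open for \emph{every} $u$.

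The crucial step is then to intersect. There are exactly $\kappa$-many sets $\vec D(u)$, $u\in[\kappa]^{|a|}$, each dense open in $\p$, so by $\kappa^+$-distributivity their intersection $D^\ast=\bigcap_{u}\vec D(u)$ is again dense open in $\p$, and $D^\ast\in V$. Since $G$ is $V$-generic there is $p\in G\cap D^\ast$, whence $p\in\vec D(u)$ for all $u\in[\kappa]^{|a|}$. Applying $j$ and using elementarity, $j(p)\in j(\vec D)(v)$ for all $v\in[j(\kappa)]^{|a|}$; instantiating at $v=a$ — legitimate since $a\in[\lambda]^{<\omega}$ and $\lambda\leq j(\kappa)$ — gives $j(p)\in j(\vec D)(a)=D$. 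As $j(p)\in j``G\subseteq H$, we conclude $H\cap D\neq\emptyset$, as required.

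The main obstacle, and the point where the hypotheses are genuinely used, is this last intersection: the extender decomposition unavoidably produces $\kappa$-many dense open sets rather than a single one, and it is precisely $\kappa^+$-distributivity that keeps their intersection dense, so that the ground-model generic $G$ can be forced to meet it. The assumption $\crit(j)=\kappa$ enters only through the inequality $\lambda\leq j(\kappa)$, which makes the seed $a$ available as a value to instantiate after applying $j$.
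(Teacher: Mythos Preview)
Your argument is correct and is precisely the standard proof of this fact. The paper does not supply its own argument here but simply refers the reader to Proposition~15.1 in Cummings' handbook chapter, where essentially the same seed-representation and $\kappa$-fold intersection argument you wrote is given; so there is nothing to compare. One tiny remark on your closing comment: the hypothesis $\crit(j)=\kappa$ is also what makes the domain of each $\vec D$ equal to $[\kappa]^{|a|}$, hence of size $\kappa$, which is what allows the distributivity to apply---not only the inequality $\lambda\le j(\kappa)$.
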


\begin{proof}
See Proposition 15.1 in \cite{cummings-handbook}.
\end{proof}

We finish the section by mentioning the fact that the property that $\delta$ is a Woodin-like cardinal can be verified in $V_{\delta+1}$. This means that if for any $M\subseteq V$ with $V_{\delta+1}\subseteq M$, $M\models ``\delta$ is Woodin".

\section{Proof of Theorem \ref{thm:woodin-forcing}}

We only need to show the last clause, since the first can be found in \cite{hamkins-woodin-strong} and the second, follows from the fact that the Woodinness of $\delta$ can be verified in $V_{\delta+1}$. 

Assume $\p$ is a $\delta$-strategically closed forcing. Let $G\subseteq \p$ be a $V$-generic filter and let $f:\delta\to \delta$ be a function in $V[G]$. We claim that for each $p\in \p$, there is a decreasing sequence $\langle p_\alpha\mid \alpha<\delta\rangle$ of conditions extending $p$, such that $p_\alpha$ decides $\dot{f}\restriction (\alpha+1)$.

To see this, consider the following play $\langle p_\alpha\mid \alpha<\delta\rangle$ of the game $G_\delta(\p)$. At stage $1$, ODD plays a condition $p_1$ that extends $p$ and decides $\dot{f}(0)$. At limit or even successor stages, by the $\delta$-strategic closure of $\p$, EVEN can play a condition according to his winning strategy. Let $\langle\beta_\xi:\xi<\delta\rangle$ be an increasing enumeration of the odd ordinals in $\delta$. If it is ODD's turn at stage $\beta_\xi$, ODD plays a condition $p_{\beta_\xi}$ extending the previous move of EVEN and such that $p_{\beta_\xi}$ decides $\dot{f}(\xi)$. Since $p_{\beta_\xi}$ will extend all the previous conditions, it actually decides $\dot{f}\restriction (\xi+1)$. Thus, $\langle p_\alpha\mid \alpha<\delta\rangle$ is the required sequence.

Now fix $p\in G$ and let $\langle p_\alpha\mid \alpha<\delta\rangle$ be the sequence generated as above. Let $g:\delta\to\delta$ be the function in $V$ determined by the $p_\alpha$'s and using the Woodinness of $\delta$, let $\kappa<\delta$ be a closure point of $g$ and $j:V\to M$ an elementary embedding with $\crit(j)=\kappa$ and $V_{j(g)(\kappa)}\subseteq M$. As usual, we can assume that $j$ is an extender embedding. Since $\p$ is $\delta$-strategically closed it is $\delta$-distributive and in particular $\kappa^+$-distributive, so using \ref{thm:transering-generic} we can lift $j$ to $j:V[G]\to M[j(G)]$. Since $j(g)(\kappa)<\delta$ the $\delta$-distributivity implies that $(V_{j(g)(\kappa)})^{V[G]}=V_{j(g)(\kappa)}$.  

Now note that by the choice of $p$ and the definition of the $p_\alpha$'s, 
$$f\restriction \kappa=(\dot{f})_G \restriction \kappa=g\restriction \kappa.$$ Thus $\kappa$ is also a closure point of $f$ in $V[G]$ and by elementarity $j(f)(\kappa)=j(g)(\kappa)$. Therefore, $j:V[G]\to M[j(G)]$ is an embedding with the property $(V_{j(f)(\kappa)})^{V[G]}\subseteq M[j(G)]$. As $f$ was chosen arbitrarily, this shows that $\delta$ is Woodin in $V[G]$.

\section{Description of the template}

\begin{prop}[Template for preserving Woodinness]\label{prop:template}
Suppose $\delta$ is Woodin, $\p$ is a ${}^\delta \delta$-bounding forcing and $G\subseteq \p$ is a $V$-generic filter. Then, $\delta$ is Woodin in $V[G]$ if the following property holds in $V[G]$: 
\begin{center}
for every $f:\delta\to\delta$, $f\in V$, there are $\kappa\leq\lambda<\delta$ such that $f``\lambda\subseteq \lambda$ and a $\lambda$-strongness for $g$ embedding $j:V[G]\to M$ with $\crit(j)=\kappa$ and $\lambda> f(\kappa)$.
\end{center}
\end{prop}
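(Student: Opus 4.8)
The plan is to verify the function form of Woodinness directly in $V[G]$, using the ${}^\delta\delta$-bounding property to pull each test function back to the ground model and then feed it into the hypothesised reflection property. So I fix an arbitrary $h:\delta\to\delta$ lying in $V[G]$; the goal is to produce $\kappa<\delta$ with $h``\kappa\subseteq\kappa$ together with an embedding $j:V[G]\to M$ of critical point $\kappa$ satisfying $(V_{j(h)(\kappa)})^{V[G]}\subseteq M$. First I would invoke ${}^\delta\delta$-bounding to fix a function $f\in V$, $f:\delta\to\delta$, with $h(\alpha)<f(\alpha)$ for all $\alpha<\delta$. Applying the hypothesised property to this $f$ yields ordinals $\kappa\leq\lambda<\delta$ with $f``\lambda\subseteq\lambda$ and a $\lambda$-strong for $f$ embedding $j:V[G]\to M$ with $\crit(j)=\kappa$ and $\lambda>f(\kappa)$; I would take $\lambda$ to be a $\beth$-fixed point and, appealing to (the $V[G]$ version of) Proposition \ref{prop:a-strong-extender}, replace $j$ by the embedding derived from the $(\kappa,\lambda)$-extender, so that $j$ is an extender embedding and in particular $j(\kappa)\geq\lambda$.

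The strongness requirement $(V_{j(h)(\kappa)})^{V[G]}\subseteq M$ should then fall out quickly. Since $j$ is $\lambda$-strong for $f$ we have $f\cap V_\lambda=j(f)\cap V_\lambda$; as $\kappa<\lambda$ and $f(\kappa)<\lambda$, the pair $\langle\kappa,f(\kappa)\rangle$ lies in this common set, so $j(f)(\kappa)=f(\kappa)<\lambda$. Transporting the domination inequality through $j$ gives $j(h)(\kappa)<j(f)(\kappa)=f(\kappa)<\lambda$, and since $j$ is $\lambda$-strong, $(V_\lambda)^{V[G]}\subseteq M$; hence $(V_{j(h)(\kappa)})^{V[G]}\subseteq (V_\lambda)^{V[G]}\subseteq M$, as required.

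The step I expect to be the crux is checking that $\kappa$ is a closure point of $h$, that is $h``\kappa\subseteq\kappa$, since the hypothesis only hands us closure of $f$ at $\lambda$, not at $\kappa$. Here I would exploit the extender structure. For $\alpha<\kappa$ we have $\alpha<\lambda$, so $f(\alpha)<\lambda$ by closure at $\lambda$, and then $j(f)(\alpha)=f(\alpha)$ as above; since $\crit(j)=\kappa>\alpha$ this reads $j(f(\alpha))=j(f)(j(\alpha))=f(\alpha)$, so $f(\alpha)$ is a fixed point of $j$ lying below $\lambda$. But for the $(\kappa,\lambda)$-extender embedding every ordinal $\eta$ with $\kappa\leq\eta<\lambda$ is moved, because $j(\eta)\geq j(\kappa)\geq\lambda>\eta$; thus the only fixed points of $j$ below $\lambda$ are the ordinals below $\kappa$. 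Consequently $f(\alpha)<\kappa$ for every $\alpha<\kappa$, giving $f``\kappa\subseteq\kappa$, and since $h(\alpha)<f(\alpha)$ this yields $h``\kappa\subseteq\kappa$.

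Putting the three ingredients together, $\kappa$ is a closure point of $h$ and $j:V[G]\to M$ is an embedding of critical point $\kappa$ with $(V_{j(h)(\kappa)})^{V[G]}\subseteq M$, which is exactly the witness required by the definition of Woodinness for $h$. As $h$ was an arbitrary function $\delta\to\delta$ in $V[G]$, this shows $\delta$ is Woodin in $V[G]$. The only points needing care are the reduction to an extender embedding in $V[G]$, so that the fixed-point analysis applies, and the harmless assumption that $\lambda$ is a $\beth$-fixed point; everything else is bookkeeping with the domination inequality.
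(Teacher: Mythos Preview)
Your argument is correct and follows the same route as the paper's. The one wrinkle is the detour through extenders: you cannot simply ``take $\lambda$ to be a $\beth$-fixed point'', since the hypothesis hands you a specific pair $\kappa\leq\lambda$, not one of your choosing. Fortunately the detour is unnecessary, because the inequality $j(\kappa)>\lambda$ is already part of the paper's definition of a $\lambda$-strongness embedding; hence your fixed-point analysis (equivalently, the paper's one-line elementarity step $j(f(\alpha))=j(f)(j(\alpha))=f(\alpha)<\lambda<j(\kappa)\Rightarrow f(\alpha)<\kappa$) goes through for the given $j$ without passing to a derived extender. Once that superfluous step is removed, your proof and the paper's coincide.
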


\begin{proof}
Let $h:\delta\to\delta$ be a function in $V[G]$ and let $f:\delta\to\delta$ be a function in $V$ that bounds $h$. By our assumption, there is a $\lambda$-strongness for $g$ embedding $j:V[G]\to M$ with $\crit(j)=\kappa$, for some $\lambda>f(\kappa)$ such that $f``\lambda\subseteq \lambda$. 

Since $j(f)\cap V_\lambda=f\cap V_\lambda$, $\lambda$ is also a closure point of $j(f)$ and for every $\alpha<\kappa$,
$$j(f)(j(\alpha))=j(f)(\alpha)<\lambda<j(\kappa).$$
By elementarity, $f(\alpha)<\kappa$ and so, $\kappa$ is a closure point of $f$. Since $f$ bounds $h$, $\kappa$ is a closure point of $h$ too. Finally, $j(h)(\kappa)<j(g)(\kappa)<\lambda$ and so, $(V_{j(h)(\kappa)})^{V[G]}\subseteq M$. As $f$ was chosen arbitrarily, $\delta$ is Woodin in $V[G]$.
\end{proof}

Thus, to verify the Woodinness of $\delta$ in ${}^\delta \delta$-bounding extensions, we only need to look at functions that exist in the ground model and find an appropriate strongness embedding for each of them. 

In our applications, in order to find these embeddings, we use \ref{prop:defns-woodin} below to fix such embeddings in $V$ and lift them through $\p$. 

%In the usual set-theoretic literature, such as \cite{kanamori}, we can found several characterisations of Woodin cardinals. We isolate one which is based on ``second-order" strong cardinals. 

\begin{prop}\label{prop:defns-woodin}
The following are equivalent for a cardinal $\delta$.
\begin{enumerate}
\item $\delta$ is Woodin, i.e. for every function $f:\delta\to \delta$ there is $\kappa<\delta$ which is a closure point of $f$ and there is an elementary embedding $j:V\to M$ with $\crit(j)=\kappa$ and $V_{j(f)(\kappa)}\subseteq M$.
\item For every $A\subseteq V_\delta$, there is $\kappa<\delta$ which is $\lessd$-strong for $A$.
\end{enumerate}
\end{prop}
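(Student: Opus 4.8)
The plan is to prove the equivalence of the two characterizations of Woodinness by relating the defining data of each. The key observation is that the ``strong for $A$'' formulation in (2) quantifies over arbitrary subsets $A\subseteq V_\delta$, while the functional formulation in (1) quantifies over functions $f:\delta\to\delta$; the bridge between them is to encode the relevant information of a function $f$ as a subset of $V_\delta$, and conversely to extract from a strongness-for-$A$ embedding the strength needed to witness the functional definition.

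For the direction $(2)\Rightarrow(1)$, I would start with a function $f:\delta\to\delta$ and let $A\subseteq V_\delta$ code $f$ in some canonical way (for instance $A=f$ itself, viewing $f$ as a subset of $V_\delta$). By (2), fix $\kappa<\delta$ that is $\lessd$-strong for $A$. I would then choose a suitable $\lambda<\delta$ with $\lambda>f(\kappa)$ and $\lambda$ a closure point of $f$ (such $\lambda$ exist below $\delta$ since $\delta$ is regular and $f``\delta\subseteq\delta$), and take the $\lambda$-strongness-for-$A$ embedding $j:V\to M$ with $\crit(j)=\kappa$. The agreement condition $A\cap V_\lambda=j(A)\cap V_\lambda$ forces $j(f)\restriction\lambda=f\restriction\lambda$, so that $\kappa$ is a closure point of $f$ and $j(f)(\kappa)=f(\kappa)<\lambda$. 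Since $V_\lambda\subseteq M$ and $j(f)(\kappa)<\lambda$, we get $V_{j(f)(\kappa)}\subseteq M$, which is exactly what (1) requires.

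For the converse $(1)\Rightarrow(2)$, given $A\subseteq V_\delta$ I would define a function $f_A:\delta\to\delta$ that encodes ``how much of $V_\delta$ needs to be captured'' — roughly, $f_A(\xi)$ should be large enough that knowing $V_{f_A(\xi)}$ and the embedding there recovers $A\cap V_\xi$ and pushes past it. A natural choice is to let $f_A(\xi)$ be the least $\beth$-fixed point above $\xi$ (or simply $\xi+1$ combined with a bookkeeping device), ensuring that a closure point $\kappa$ of $f_A$ is a $\beth$-fixed point and that $j(f_A)(\kappa)$ exceeds $\kappa$. Applying (1) to $f_A$ yields $\kappa$ and $j:V\to M$ with $V_{j(f_A)(\kappa)}\subseteq M$; setting $\lambda=j(f_A)(\kappa)$ (or any target $\lambda<\delta$ below it, using closure) gives $V_\lambda\subseteq M$ and $j(\kappa)>\lambda$. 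The remaining task is to arrange the agreement $A\cap V_\lambda=j(A)\cap V_\lambda$, which follows because $A\cap V_\lambda\in V_\lambda\subseteq M$ and the relevant rank-initial segments of $A$ are computed identically in $V$ and $M$.

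The main obstacle I anticipate is the $(1)\Rightarrow(2)$ direction, specifically arranging that the single embedding obtained from the functional definition simultaneously handles \emph{all} target heights $\lambda<\mu=\delta$ needed for $\lessd$-strongness for $A$, and that the coding function $f_A$ is chosen so that its closure points are $\beth$-fixed points of sufficient richness. One must be careful that $\lambda$-strongness for $A$ is required for \emph{all} $\lambda<\delta$, so either the function $f_A$ must be chosen to grow fast enough that a single $\kappa$ works uniformly, or one appeals to the normality and $\delta$-completeness of the Woodin filter to intersect the relevant sets of closure points. I expect the cleanest route is to exploit Proposition \ref{prop:a-strong-extender}, passing to an extender embedding so that the agreement $A\cap V_\lambda=j(A)\cap V_\lambda$ holds automatically at $\beth$-fixed points $\lambda$, thereby upgrading a bare $\lambda$-strongness embedding into a $\lambda$-strongness-for-$A$ embedding without extra work.
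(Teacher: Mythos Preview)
The paper does not actually prove this proposition; it simply cites 26.14 in Kanamori. So there is nothing to compare your argument against on the paper's side, and the question becomes whether your sketch stands on its own.

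Your $(2)\Rightarrow(1)$ direction is essentially correct. One step you glide over: from $j(f)\restriction\lambda=f\restriction\lambda$ you still need that $\kappa$ itself (not just $\lambda$) is a closure point of $f$. This is the reflection trick: for $\alpha<\kappa$ one has $j(f(\alpha))=j(f)(j(\alpha))=j(f)(\alpha)=f(\alpha)<\lambda<j(\kappa)$, hence $f(\alpha)<\kappa$. The paper uses exactly this argument in the proof of Proposition~\ref{prop:template}, so your sketch is in line with what is needed.

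Your $(1)\Rightarrow(2)$ direction, however, has a genuine gap. The assertion that ``$A\cap V_\lambda\in V_\lambda\subseteq M$ and the relevant rank-initial segments of $A$ are computed identically in $V$ and $M$'' yields $j(A)\cap V_\lambda=A\cap V_\lambda$ is simply false: having $A\cap V_\lambda$ as an \emph{element} of $M$ tells you nothing about how $j(A)$ behaves below $\lambda$. Your proposed fallback, invoking Proposition~\ref{prop:a-strong-extender} to ``upgrade a bare $\lambda$-strongness embedding into a $\lambda$-strongness-for-$A$ embedding'', misreads that proposition: its hypothesis is that $j$ is \emph{already} $\lambda$-strong for $A$, and its conclusion is merely that passing to the derived extender embedding preserves this. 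It does not manufacture agreement with $A$ from a plain strongness embedding.

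The standard argument (as in Kanamori) is by contradiction and requires encoding $A$ into the function to which (1) is applied. Assuming no $\kappa<\delta$ is ${<}\delta$-strong for $A$, one lets $f(\alpha)$ dominate the least height at which $\alpha$ fails to be strong for $A$, and simultaneously builds the set $A$ into $f$ (using that $\delta$ is inaccessible, so $V_\delta$ can be coded by $\delta$). A closure point $\kappa$ of $f$ with witnessing embedding $j$ then satisfies $j(A)\cap V_\lambda=A\cap V_\lambda$ precisely because $A$ was woven into $f$ and $j(f)$ agrees with $f$ below $\lambda$; this contradicts the choice of $f(\kappa)$. Your coding function $f_A$ as described carries no information about $A$ at all, so it cannot produce the required agreement.
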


\begin{proof}
See 26.14 in \cite{kanamori}.
\end{proof}

As a first application, let us see how to force GCH while preserving a Woodin cardinal $\delta$. By the second clause of \ref{thm:woodin-forcing}, it suffices to show that the Woodinness of $\delta$ is preserved after we force GCH below $\delta$.

Let $\langle \p_\alpha,\dotq_\beta\mid \alpha\leq\delta,\beta<\delta\rangle$ be the Easton support $\delta$-iteration, where $\dotq_\alpha$ is a name for $Add(\alpha^+,1)$ whenever $\Vdash_{\p_\alpha} ``\alpha$ is a cardinal", and trivial otherwise.

Let $G\subseteq \p$ be a $V$-generic filter. Since $\delta$ is Mahlo and we are using Easton support, it follows that $\p$ is $\delta$-c.c. and so, ${}^\delta \delta$-bounding. We will show that $\delta$ remains Woodin in $V[G]$ by using the template, so let $f:\delta\to\delta$ be a function in $V$ and let $j:V\to M$ be a $\lambda$-strongness for $f$ embedding with $\crit(j)=\kappa$, where $\lambda\in (\kappa,\delta)$ is an inaccessible cardinal such that $f``\lambda\subseteq \lambda$. By \ref{prop:a-strong-extender}, we can assume that $j$ is an extender embedding by a $(\kappa,\lambda)$-extender and that ${}^\kappa M\subseteq M$.

We aim to lift $j$ through $\p$ and for this end, we factorise $\p_\delta$ as $\p_\kappa\ast \dotp_{\geq\kappa}$, where $\dotp_{\geq\kappa}$ is a name for the stages greater than or equal to $\kappa$. 

To lift $j$ through $\p_\kappa$, note that by elementarity, $j(\p_\kappa)$ is the GCH forcing up to $j(\kappa)$. Since $V_\lambda\subseteq M$ and $\lambda$ is inaccessible in both $V$ and $M$, the first $\lambda$-stages of $j(\p_\kappa)$ are the same as for $\p$. Thus, we can write $j(\p_\kappa)$ as $\p_\lambda\ast \dotp_{tail}$, where $\dotp_{tail}$ is naming the stages greater than or equal to $\lambda$. Using $G_\lambda$ as an $M$-generic filter, we can form $M[G_\lambda]$. Now, we claim that in $V[G_\lambda]$ there is an $M[G_\lambda]$-generic filter $H_1$ for $\p_{tail}=(\dotp_{tail})_{G_\lambda}$. 

Write $\p_\lambda$ as $\p_\kappa\ast \dotp_{[\kappa,\lambda)}$. We know that $V\models {}^\kappa M\subseteq M$ and since $\p_\kappa$ has the $\kappa$-c.c. and $\p_{[\kappa,\lambda)}$ is $\kappa^+$-distributive in $V[G_\kappa]$, the usual arguments show that $V[G_\lambda]\models {}^\kappa M[G_\lambda]\subseteq M[G_\lambda]$. Moreover, $\p_{tail}$ is (much more than) $\kappa^+$-strategically closed in $M[G_\lambda]$. An easy counting argument in $V[G_\lambda]$, shows that $\p_{tail}$ has at most $\kappa^+$-many maximal antichains in $M[G_\lambda]$. Thus, by \ref{thm:diagonalisation} it is possible to construct a generic filter $H_1\subseteq \p_{tail}$. Since $j``G_\kappa=G_\kappa\subseteq G_\lambda\ast H_1$, we can lift $j$ to $j:V[G_\kappa]\to M[j(G)]$, where $j(G)=G_\lambda\ast H_1$.

To further lift $j$, note that $\p_{\geq\kappa}=(\dotp_{\geq\kappa})_{G_\kappa}$ is $\kappa^+$-distributive and so, by \ref{thm:transering-generic}, $j``G_{\geq\kappa}$ generates an $M[j(G_\kappa)]$-generic filter $H_2$ for $j(\p_{\geq\kappa})$. This allows us to further lift $j$ to $j:V[G]\to M[j(G)]$, where $j(G)=G_\lambda\ast H_1\ast H_2$. Since $(V_\lambda)^{V[G]}=(V_\lambda)^{V[G_\lambda]}=V_\lambda[G_\lambda]\subseteq M[j(G)]$, it follows that $j$ is a $\lambda$-strongness embedding. Also $j(f)\cap V_\lambda=f\cap V_\lambda$, because $f\in V$ and $j\restriction V$ had the same property. Hence $j$ is a $\lambda$-strongness for $f$ embedding. 
Thus, we have verified the template \ref{prop:template} and so, $\delta$ is Woodin in $V[G]$.

\section{Proof of Cody's theorem \ref{thm:woodin-easton}.}

%\begin{proof}
%Since $\delta$ is a closure point of $F$, it follows that there is a club of closure points of $F$ below $\delta$. 

By \ref{thm:woodin-forcing}, it suffices to realise $F$ below $\delta$, as to realise it at ordinals greater than or equal $\delta$ we can use Easton's original forcing which is $\delta$-strategically closed.

We use the forcing notion $\p=\langle \p_\alpha,\dotq_\beta\mid \alpha\leq\delta,\beta<\delta\rangle$ found in \cite{cody-easton-woodin}, which is defined as follows. Suppose $\p_\alpha$ has been defined. $\dotq_\alpha$ is non-trivial only when $\alpha$ is a closure point of $F$. In that case, let $\mu$ be the first closure point of $F$ above $\alpha$. If $\alpha$ is an inaccessible closure point of $F$, let $\dotq_\alpha$ name the Easton product\footnote{For the exact definition of Sacks forcing, see \cite{cody-easton-woodin} or \cite{friedman-thompson-perfect-trees}.} 
\begin{equation*}
Sacks(\kappa,F(\kappa))\times \prod_{\substack{\alpha<\lambda<\mu \\ \lambda\text{ regular}}} Add(\lambda, F(\lambda)).
\end{equation*}
If $\alpha$ is a singular closure point of $F$, let $\dotq_\alpha$ name the Easton product 
$$\prod_{\substack{\alpha<\lambda<\mu \\ \lambda \text{ regular}}} Add(\lambda,F(\lambda)).$$

Let $G\subseteq \p$ be a $V$-generic filter. We are going to show that $\delta$ remains Woodin by applying the template \ref{prop:template}. So fix $h:\delta\to\delta$ in $V$ and let $\kappa<\delta$ be a $\lessd$-strong for $F \sqcup h$ cardinal. Pick an inaccessible cardinal $\lambda\in (\kappa,\delta)$ such that $h``\lambda\subseteq \lambda$ and $F``\lambda\subseteq \lambda$ and let $j:V\to M$ be a $\lambda$-strongness for $F\sqcup h$ embedding with $\crit(j)=\kappa$.

To lift $j$ through $\p_\kappa$, note that by elementarity, $j(\p_\kappa)$ is realising $j(F)$ in $M$ below $j(\kappa)$. Since $j(F)\cap V_\lambda=F\cap V_\lambda$ and $\lambda$ is a closure point of $F$, it follows that $j(F)\restriction \lambda=F\restriction \lambda$. Also, $\lambda$ is inaccessible in both $V$ and $M$, so the first $\lambda$-stages of $j(\p_\kappa)$ are the same as for $\p$. Thus, we can write $j(\p_\kappa)$ as $\p_\lambda\ast \dotp_{tail}$, where $\dotp_{tail}$ is naming the stages greater than or equal to $\lambda$. Using $G_\lambda$ as an $M$-generic filter, we can form $M[G_\lambda]$. As in the proof of GCH above, we can construct in $V[G_\lambda]$ an $M[G_\lambda]$-generic filter $H_1$ for $\p_{tail}=(\dotp_{tail})_{G_\lambda}$. 
%To lift $j$ through $\p_\kappa$ note that $j(\p_\kappa)\simeq \p_\lambda\ast \dotp_{tail}$. As GCH holds in $V$, a simple counting argument shows that $\p_{tail}$ has at most $\kappa^+$-many maximal antichains in $M[G_\lambda]$, as counted in $V[G_\lambda]$. Also since $V\models {}^\kappa M\subseteq M$ and $\p_\kappa$ has the $\kappa$-c.c., by applying \ref{prop:closure-external} we have $V[G_\kappa]\models {}^\kappa M[G_\kappa]\subseteq M[G_\kappa]$. Also, $Add(\kappa,F(\kappa))$ is $\kappa^+$-c.c. and $Add(\xi,F(\xi))$ is $\kappa^+$-distributive for all $\xi>\kappa$. So, by \ref{prop:closure-distributive}, $V[G_\lambda]\models {}^\kappa M[G_\lambda]\subseteq M[G_\lambda]$. Finally $\p_{tail}$ is (more than) $\kappa^+$-strategically closed in $M[G_\lambda]$ and so, by \ref{thm:diagonalisation} we can construct a generic filter $H_1$ for $\p_{tail}$.
Since $j``G_\kappa=G_\kappa\subseteq G_\lambda\ast H_1$, we can lift $j$ to $j:V[G_\kappa]\to M[j(G_\kappa)]$, where $j(G_\kappa)=G_\lambda\ast H_1$. 

Let $g$ be the $V[G_\kappa]$-generic filter for $Sacks(\kappa,F(\kappa))$. To further lift $j$ through $Sasks(\kappa,F(\kappa))$, we take advantage of the specific properties of Sacks forcing and in particular the fusion property. We give here a sketch, see Section 3.2 in \cite{cody-easton-woodin} and Lemmata 4-6 in \cite{friedman-thompson-perfect-trees} for the details. 

For $\alpha<j(F(\kappa))$, if we let $t_\alpha=\bigcap_{p\in G_\kappa} j(p)(\alpha)$ then there are two cases:
\begin{enumerate}
	\item $\alpha\in j``F(\alpha)$, in which case $t_\alpha$ is a \emph{tuning fork}, i.e. a tree on $2^{<j(\kappa)}$ that is the union of two cofinal branches that split at $\kappa$,
	\item $\alpha\not\in \ran(j)$, in which case $t_\alpha$ is a cofinal branch in $2^{<j(\kappa)}$.
\end{enumerate}

In both cases, $t_\alpha$ has a branch $t'_\alpha\subseteq t$ that extends $j(p)(\alpha)$ for all $\alpha$. The point now is that the filter 
$$H_2=\{p\in j(Sacks(\kappa,F(\kappa)))\mid \forall \alpha<j(F(\kappa)) (t'(\alpha)\subseteq p(\alpha))\}$$
is $M[j(G_\kappa)]$-generic for $j(Sacks(\kappa,F(\kappa)))$ and contains $j``g$. Since this generic is constructed in $V[G_\lambda]$, we can lift $j$ to $j:V[G_\kappa][g]\to M[j(G_\kappa)][j(g)]$, where $j(g)=H_2$.

Now the rest of the forcing is $\kappa^+$-distributive, so by \ref{thm:transering-generic} we can use the filter $H_3$ generated by $j``G_{>\kappa}$ to lift $j$ to $j:V[G]\to M[j(G)]$, where $j(G)=G_\lambda\ast H_1\ast H_2\ast H_3$. Note that $(V_\lambda)^{V[G]}=V_\lambda[G_\lambda]\subseteq M[G_\lambda]\subseteq M[j(G)]$ and so, $j$ is $\lambda$-strongness embedding. Also $j(h)\cap V_\lambda=h\cap V_\lambda$, because $h\in V$ and $j\restriction V$ had the same property. As $h$ was chosen arbitrarily, we have verified the template \ref{prop:template} and so, $\delta$ remains Woodin in $V[G]$. \hfill $\qed$
%\end{proof}

\section{Proof of Theorem \ref{thm:woodin-indesructibility}}

Let $\p=\langle \p_\alpha,\dotq_\beta\mid \alpha\leq\delta,\beta<\delta\rangle$ be an Easton support $\delta$-iteration as in \ref{thm:woodin-indesructibility}. We are going to show that $\delta$ remains Woodin after forcing with $\p$ by using the template \ref{prop:template}. Let $G\subseteq \p$ be a $V$-generic filter and let $f:\delta\to\delta$ be a function in $V$. 

Let $\scl(\p)$ denote the set of strong closure points of $\p$. With the usual closure arguments, we can show that $\scl(\p)$ is a club in $\delta$ and so, it is a member of the Woodin filter of $\delta$. Hence, we can find $\kappa<\delta$ which is $\lessd$-strong for $f\sqcup \p\sqcup \scl(\p)$ and is itself a strong closure point of $\p$. Let $\lambda\in(\kappa,\delta)$ be an inaccessible cardinal, such that it is a limit point of $\cl(p)$ and $\lambda>f(\kappa)$. Let $j:V\to M$ be a $\lambda$-strongness for $f\sqcup \p\sqcup \scl(\p)$ embedding with $\crit(j)=\kappa$. By \ref{prop:a-strong-extender} we can assume $j$ is given by a $(\kappa,\lambda)$-extender and ${}^\kappa M\subseteq M$.

We factorise $\p$ as $\p_\kappa\ast \dotp_{\geq\kappa}$. We first lift $j$ through $\p_\kappa$. Note that $\lambda$ is inaccessible in both $V$ and $M$, so we use a direct limit at the $\lambda$-stage of both $\p$ and $j(\p)$. Also, $j(\p)\cap V_\lambda=\p\cap V_\lambda$ and $\p_\lambda\subseteq V_\lambda$, thus $j(\p_\kappa)\simeq \p_\lambda\ast \dotp_{tail}$, where $\dotp_{tail}$ is a name for the stages greater than or equal to $\lambda$.

Write $\p_\lambda$ as $\p_\kappa\ast \dotp_{[\kappa,\lambda)}$. Since $G_\lambda$ is $V$-generic for $\p_\lambda$ it is also $M$-generic, so we can form $M[G_\lambda]$. We will now construct an $M[G_\lambda]$-generic filter for $\p_{tail}=(\dotp_{tail})_{G_\lambda}$ by using \ref{thm:diagonalisation}. Firstly, note that ${}^\kappa M\subseteq M$ and $\p_\kappa$ is $\kappa$-c.c., so $V[G_\kappa]\models {}^\kappa M[G_\kappa]\subseteq M[G_\kappa]$. Also, $\p_{[\kappa,\lambda)}$ is $\kappa^+$-distributive (since $\kappa$ is a strong closure point of $\p$) and so, $V[G_\lambda]\models {}^\kappa M[G_\lambda]\subseteq M[G_\lambda]$. By the fact that $j(\scl(\p))\cap V_\lambda=\scl(\p)\cap V_\lambda$, we know that there are strong closure points of $j(\p)$ in $(\kappa,\lambda)$ an so, $\p_{tail}$ is at least ${<}\kappa^+$-strategically closed in $M[G_\lambda]$. Finally, an easy counting argument using GCH, shows that $\p_{tail}$ has at most $\kappa^+$-many maximal antichains in $M[G_\lambda]$, counted in $V[G_\lambda]$.\footnote{Here is the only part of the proof that uses the fact that GCH holds in $V$.} Thus, by \ref{thm:diagonalisation} we can construct an $M[G_\lambda]$-generic filter $H_1$ for $\p_{tail}$. Since $j``G_\kappa=G_\kappa\subseteq G_\lambda\ast H_1$, we can lift $j_1$ to $j_1:V[G_\kappa]\to M[j(G_\kappa)]$, where $j(G_\kappa)=G_\lambda\ast H_1$.

The rest of $\p$, i.e. $\p_{\geq\kappa}$ is $\kappa^+$-distributive (since $\kappa$ is a strong closure point of $\p$), so by \ref{thm:transering-generic}, $j``G_{\geq\kappa}$ generates an $M[j(G_\kappa)]$-generic filter $H_2$ for $j(\p_{\geq\kappa})$, which allows us to lift $j$ to $j:V[G]\to M[j(G)]$, where $j(G)=G_\lambda\ast H_1\ast H_2$.

Since $(V_\lambda)^{V[G]}=(V_\lambda)^{V[G_\lambda]}=V_\lambda[G_\lambda]\subseteq M[G_\lambda]\subseteq M[j(G)]$, $j$ is a $\lambda$-strongness embedding. Moreover, $j(f)\cap V_\lambda=f\cap V_\lambda$, because $f\in V$ and $j\restriction V$ had the same property. As $f$ was chosen arbitrarily, the template \ref{prop:template} holds and so, $\delta$ is Woodin in $V[G]$. \hfill $\qed$

\medskip

There are various forcing notions that satisfy the conditions of \ref{thm:woodin-indesructibility}, we give below a small sample.

\begin{cor}
If $\delta$ is Woodin, then it remains so after forcing the following:
\begin{enumerate}
	\item $\square_\kappa$ for all infinite cardinals $\kappa<\delta$,
	\item $\diamondsuit^+_{\kappa^+}$ for all infinite cardinals $\kappa<\delta$,
	\item every non-strong regular cardinal has a non-reflecting stationary subset,
	\item $V=HOD$.
\end{enumerate}
\end{cor}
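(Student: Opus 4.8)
The plan is to handle the four items uniformly. For each I would produce an Easton support $\delta$-iteration $\p=\langle\p_\alpha,\dotq_\beta\mid\alpha\leq\delta,\ \beta<\delta\rangle$ whose factors realise the displayed property stage by stage, check the two hypotheses of Theorem~\ref{thm:woodin-indesructibility}, and invoke it; the behaviour at and above $\delta$ is then taken care of by the $\delta$-strategically closed tail, using Theorem~\ref{thm:woodin-forcing}. Concretely, for (1) let $\dotq_\beta$ name the usual poset adding a $\square_\beta$-sequence when $\beta$ is a cardinal (trivial otherwise); for (2) let $\dotq_\beta$ add a $\diamondsuit^+_{\beta^+}$-sequence; for (4) let $\p$ be one of the standard reverse Easton codings that force a definable well-order of $V_\delta$, so that $V=\mathrm{HOD}$ holds below $\delta$, each factor being an $\Add(\beta^+,1)$- or coding-type poset; and for (3) let $\dotq_\beta$ add a non-reflecting stationary subset of $\beta$ exactly when $\Vdash_{\p_\beta}``\beta$ is a regular non-strong cardinal''.

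Hypothesis (1), that $\p\subseteq V_\delta$, I would dispatch with the usual bookkeeping: $\delta$ is inaccessible, GCH keeps each $|\dotq_\beta|$ well below $\delta$, and Easton support then yields $\p_\alpha\subseteq V_\alpha$ for a club of $\alpha$, whence $\p=\p_\delta\subseteq V_\delta$.

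The content is hypothesis (2), that $\scl(\p)$ lies in the Woodin filter $F$. For (1), (2) and (4) each factor $\dotq_\beta$ is $<\beta^+$-strategically closed, so for any $\alpha$ with $\p_\alpha\subseteq V_\alpha$ and any $\beta\geq\alpha$ the factor $\dotq_\beta$ is $<\beta^+$-strategically closed, hence $<\alpha^+$-strategically closed; thus $\scl(\p)$ contains the club $\{\alpha<\delta\mid\p_\alpha\subseteq V_\alpha\}$ and so lies in $F$, exactly as in the proof of Theorem~\ref{thm:woodin-indesructibility}. For (3) this fails, since at a non-strong inaccessible $\beta$ the factor is only $<\beta$-strategically closed, so such $\beta$ are not strong closure points and $\scl(\p)$ need not contain a club. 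Instead I would show $\scl(\p)$ contains the set $S$ of $<\delta$-strong cardinals of $V$: at such a $\kappa$ the factor $\dotq_\kappa$ is trivial, while every nontrivial later factor $\dotq_\beta$ has $\beta\geq\kappa^+$ and is at least $<\beta$-strategically closed, hence $<\kappa^+$-strategically closed, so $\kappa\in\scl(\p)$. By Proposition~\ref{prop:defns-woodin}, for every $A\subseteq V_\delta$ there is $\kappa<\delta$ that is $<\delta$-strong for $A$, which in particular lies in $S$; hence $S\in F$ and therefore $\scl(\p)\in F$.

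The main obstacle is the coherence of the strong/non-strong dichotomy in item (3): the conclusion we want holds of the cardinals that are non-strong in $V[G]$, whereas the iteration is driven by non-strongness as decided along $\p$. I would close the gap from both sides. A lifting argument through the small forcing $\p_\kappa$ (as in the template) shows that each $<\delta$-strong $\kappa\in S$ we skipped remains $<\delta$-strong in $V[G]$; being weakly compact it then genuinely admits no non-reflecting stationary subset, so omitting it was correct and the target statement holds vacuously there. Conversely, at each $\beta$ where $\dotq_\beta$ fires we add a non-reflecting stationary subset of $\beta$, destroying its weak compactness and hence its strongness, so $\beta$ is non-strong in $V[G]$ and carries the required set. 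Checking that these two directions exactly partition the regular cardinals below $\delta$ in $V[G]$ --- that the iteration neither resurrects strongness at a fired stage nor destroys it at a skipped one --- is the delicate point, to be handled by localising the template's lifting and reflection arguments at each relevant $\beta$.
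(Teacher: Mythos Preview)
Your proposal is correct and follows the same route as the paper: verify the hypotheses of Theorem~\ref{thm:woodin-indesructibility} for the standard Easton iterations realising each of the four properties, then invoke it. The paper's own proof is a one-liner that simply asserts the iterands are ``sufficiently strategically closed'' and points to the literature for the constructions, so your write-up is considerably more detailed than what the paper actually supplies.

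The one place where you genuinely add something is item~(3). The paper silently defers to \cite{identity-crises-I}, whereas you correctly observe that the non-reflecting stationary set forcing at a regular $\beta$ is only ${<}\beta$-strategically closed, so $\scl(\p)$ need not contain a club; you then recover membership in the Woodin filter by showing $\scl(\p)$ contains the set of ${<}\delta$-strong cardinals, which lies in $F$ by Proposition~\ref{prop:defns-woodin}. This is exactly the right fix and is presumably what the cited reference does. Your closing paragraph on the coherence of ``non-strong in $V$'' versus ``non-strong in $V[G]$'' is a legitimate point about the \emph{statement} of~(3) rather than about the preservation of $\delta$; the paper does not address it at all, so your treatment is strictly more careful than the original.
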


\begin{proof}
All the above properties can be forced with Easton iterations whose iterands are sufficiently strategically closed. For adding $\square$-sequences, see 6.6 in \cite{cummings-handbook}, for adding $\diamondsuit^+$-sequences see Section 12 in \cite{cummings-foreman-magidor}, for adding non-reflecting stationary sets see Section 1 in \cite{identity-crises-I} and for forcing $V=HOD$ see \cite{brooke-taylor-definable-wellorder} or \cite{hamkins-wholeness}.
\end{proof}
 
\section{Questions}

We will conclude with a question based on the following theorem of Brooke-Taylor in \cite{brooke-taylor-indestructibility}. 

\begin{thm}\label{thm:bt}
Suppose $\delta$ is a Vop\v enka cardinal and $\p=\langle \p_\alpha,\dotq_\beta\mid \alpha\leq\delta,\beta<\delta\rangle$ is an Easton support $\delta$-iteration such that 
\begin{enumerate}
	\item for all $\alpha<\delta$, $\dotq_\alpha$ has size less than $\delta$
	\item for all $\alpha<\delta$ there is $\beta\geq\alpha$ such that $\forall \gamma\geq\beta$, $\dotq_\beta$ is naming an $\alpha$-directed closed forcing.
\end{enumerate}
Then $\delta$ remains Vop\v enka after forcing with $\p$.
\end{thm}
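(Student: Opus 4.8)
My plan is to follow the blueprint of the Woodin arguments above, using the embedding characterisation of Vop\v enkaness in place of Proposition \ref{prop:defns-woodin}. Recall that $\delta$ is Vop\v enka if and only if for every $A\subseteq V_\delta$ there is $\kappa<\delta$ that is \emph{$A$-extendible below $\delta$}: for every $\lambda\in(\kappa,\delta)$ there are $\mu\in(\lambda,\delta)$ and an elementary $j\colon(V_\lambda,\in,A\cap V_\lambda)\to(V_\mu,\in,A\cap V_\mu)$ with $\crit(j)=\kappa$ and $j(\kappa)>\lambda$. As with the Woodin filter used above, for a fixed $A$ these witnesses lie in a normal Vop\v enka filter, so I may intersect them with clubs freely. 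The whole proof then amounts to lifting these set-sized embeddings through $\p$, and the only ingredients are Propositions \ref{thm:diagonalisation} and \ref{thm:transering-generic}, exactly as in Theorem \ref{thm:woodin-indesructibility}.

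First I would record the template-style reduction. Since every iterand has size less than $\delta$ and $\delta$ is inaccessible, Easton support makes $\p$ be $\delta$-c.c.\ (indeed ${}^\delta\delta$-bounding), whence $V_\delta^{V[G]}=V_\delta[G]$ and every $A\in V[G]$ with $A\subseteq V_\delta^{V[G]}$ has a nice $\p$-name that, by the $\delta$-c.c., is coded by a single $B\subseteq V_\delta$ lying in $V$; I also fold into $B$ a code for $\p$ and for the closure data of hypothesis (2). It therefore suffices to produce in $V[G]$, for arbitrary such $A$, a cardinal $\kappa$ that is $A$-extendible below $\delta$, and this I obtain by lifting extendibility embeddings chosen in $V$ for the set $B$.

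Next comes the choice of critical point and the lift, run as in the proof of Theorem \ref{thm:woodin-indesructibility}. Using hypothesis (2), the set of $\kappa<\delta$ for which the entire tail $\dotp_{\ge\kappa}$ is $\kappa$-directed closed is a club (it is the set of closure points of the map sending $\alpha$ to the least stage past which all iterands are $\alpha$-directed closed), hence lies in the Vop\v enka filter; I fix such a $\kappa$ that is in addition $B$-extendible below $\delta$ and satisfies $\p_\kappa\subseteq V_\kappa$, so that $\kappa$ plays the role of a strong closure point. For a suitable inaccessible $\lambda\in(\kappa,\delta)$ that is a closure point of $B$ with $V_\lambda^{V[G]}=V_\lambda[G_\lambda]$, I take an extendibility embedding $j\colon V_\lambda\to V_\mu$ with $\crit(j)=\kappa$ and $j(\kappa)>\lambda$, arranging that $j$ is generated by a $(\kappa,\lambda)$-extender with $\kappa$-closed target as required by Propositions \ref{thm:diagonalisation} and \ref{thm:transering-generic}. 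Because $j$ fixes the code of $\p$ below $\lambda$, we factor $j(\p_\kappa)\simeq\p_\lambda\ast\dotp_{tail}$; using the real $G_\lambda$ on the first factor, Proposition \ref{thm:diagonalisation} to build a $V_\mu[G_\lambda]$-generic $H_1$ for $\p_{tail}$, and Proposition \ref{thm:transering-generic} to transfer a generic $H_2$ for the $\kappa^+$-distributive quotient $\dotp_{[\kappa,\lambda)}$, I lift $j$ to $j^+\colon V_\lambda[G_\lambda]\to V_\mu[G_\lambda\ast H_1\ast H_2]$ inside $V[G]$. Since $B$ codes $\dot A$ and $j$ fixes $B$ below $\lambda$, the map $j^+$ preserves $A$ on $V_\lambda^{V[G]}=V_\lambda[G_\lambda]$, which is what $A$-extendibility below $\delta$ in $V[G]$ requires; as $A$ was arbitrary, $\delta$ is Vop\v enka in $V[G]$.

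I expect the main obstacle to be the construction of the generic $H_1$ for the predicted tail $\p_{tail}$ and, with it, the verification that the resulting embedding genuinely certifies extendibility for the \emph{real} $A$ in $V[G]$. The hypotheses of Proposition \ref{thm:diagonalisation} must be met with only the tools at hand: enough closure of the target, which is precisely what the directed closure of hypothesis (2) delivers once transported across $j$, together with a bound on the number of maximal antichains. Here, unlike in Theorem \ref{thm:woodin-indesructibility}, GCH is \emph{not} assumed, so the antichain count cannot be read off directly; instead one must exploit the $\kappa$-directed closure of $\p_{tail}$ and the size bound of hypothesis (1), with a careful choice of $\mu$, to build $H_1$ by directedness rather than by counting. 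Making this interplay of hypotheses (1) and (2) precise, and ensuring that the target $V_\mu[G_\lambda\ast H_1\ast H_2]$ coheres with $V[G]$ so that $A\cap V_\mu^{V[G]}$ is the object actually preserved by $j^+$, is the technical heart of the argument.
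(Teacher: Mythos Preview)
The paper does not prove Theorem~\ref{thm:bt}; it is merely quoted from \cite{brooke-taylor-indestructibility} to motivate the closing question, so there is no in-paper argument to compare your proposal against.

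On the substance of your sketch: the global strategy---code the new $A\subseteq V_\delta^{V[G]}$ by a nice name $B\subseteq V_\delta$ in $V$ using the $\delta$-c.c., choose $\kappa$ that is $B$-extendible below $\delta$ and lies in the club of closure points supplied by hypothesis~(2), and lift the witnessing embeddings---is correct and is essentially Brooke-Taylor's. Where your plan goes wrong is the lifting mechanism. Propositions~\ref{thm:diagonalisation} and~\ref{thm:transering-generic} are tailored to class extender embeddings $j\colon V\to M$; an extendibility witness is a set map $j\colon V_\lambda\to V_\mu$, and your device of ``arranging that $j$ is generated by a $(\kappa,\lambda)$-extender'' replaces the target $V_\mu$ by an ultrapower $M_E$, destroying precisely the structure you need to certify $A$-extendibility in $V[G]$. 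In Brooke-Taylor's proof the lift uses neither diagonalisation nor generic transfer: the image $j``G$ on the relevant tail is a directed set of size below the closure bound, and hypothesis~(2) provides enough directed closure on the target side for a single \emph{master condition} to lie below it, after which one simply forces below that condition. This is exactly why the hypothesis demands $\alpha$-\emph{directed} closure rather than the strategic closure that sufficed in Theorem~\ref{thm:woodin-indesructibility}, and it is what stands in for the GCH-based antichain count you rightly identified as unavailable here.
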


Since Vop\v enka cardinals are the Woodinised analogue for supercompactness (see \cite{perlmutter-2015}) it is natural to ask if the same iterations preserve Woodin cardinals.

\begin{qst}
If $\delta$ is Woodin and $\p$ is an iteration as in \ref{thm:bt}, does $\delta$ remain Woodin after forcing with $\p$?
\end{qst}

We anticipate that a negative answer, i.e. a counterexample, would have to be based on some substantial difference between the two notions, such as the existence of canonical inner models for the one and not the other.

\bibliographystyle{plain}
\bibliography{math_refs}

\end{document}